\documentclass{article}

\usepackage{amsthm,amsmath,amsfonts,amssymb}
\usepackage[a4paper,portrait]{geometry}
\geometry{left=2cm,right=2cm,top=2cm,foot=2cm}

\usepackage{tikz}
\usepackage{authblk}
\usepackage{hyperref}

\theoremstyle{plain}

\newtheorem{theorem}{Theorem}[section]
\newtheorem{proposition}[theorem]{Proposition}
\newtheorem{corollary}[theorem]{Corollary}
\newtheorem{lemma}[theorem]{Lemma}

\newcommand{\N}{{\mathbb N}}
\newcommand{\E}{{\mathbb E}}
\renewcommand{\P}{{\mathbb P}}
\newcommand{\V}{{\mathcal M}}
\newcommand{\R}{{\mathbb R}}

\begin{document}

\author[1]{Jakub Gismatullin\footnote{jakub.gismatullin@uwr.edu.pl \quad orcid: 0000-0002-4711-3075}}    \affil[1]{Instytut Matematyczny Uniwersytetu Wroc{\l}awskiego, pl. Grunwaldzki 2/4, 50-384 Wroc{\l}aw}
\author[1,2]{Patrick Tardivel\footnote{Patrick.Tardivel@u-bourgogne.fr \quad  orcid: 0000-0002-8496-3909}} \affil[2]{Institut de Mathématiques de Bourgogne,
UMR 5584 CNRS, Université de Bourgogne,  Dijon, France}

\title{Beta distribution and associated Stirling numbers of the second kind}

\date{}

\maketitle

\begin{abstract}
This article gives a formula for associated Stirling numbers of the second kind  based on the moment of a sum of independent random variables having a beta distribution. From this formula we deduce, using probabilistic approaches, lower and upper bounds for these numbers.
\end{abstract}

\section{Introduction}
Classical Stirling numbers of the second kind $S(p,m)$ counts the number of all partitions of 
$\{1,\dots,p\}$ into $m$ nonempty subsets, for $p\in\N_{>0}$ and $ m\in\N$. More generally, the $r-$associated Stirling number $S_r(p, m)$, with $r\in \N_{>0}$, is the number of all partitions of 
$\{1,\dots,p\}$, into $m$ subsets where each subset contains at least $r$ elements \cite[p. 221]{comtet}. Obviously $S(p,m)=S_1(p,m)$.  Some sub-sequences of the multi-sequence $\{S_r(p,m):p,m,r\in\N_{>0},\ p\ge rm\}$ are in the On-Line Encyclopedia of Integer Sequences (OEIS) \cite{oeis}. Specifically, arrays  $\{S_1(p,m)\}_{\{p,m\}}$,   $\{S_2(p,m)\}_{\{p,m\}}$,
$\{S_3(p,m)\}_{\{p,m\}}$   are identified in the OEIS  by
\href{http://oeis.org/A008277}{A008277},  \href{http://oeis.org/A008299}{A008299},  \href{http://oeis.org/A059022}{A059022}. Moreover, sequences
$\{S_2(k + 6, 3)\}_{\{k\}}$, $\{S_2(k + 8, 4)\}_{\{k\}}$, representing the	number of ways of placing $k+6$ or $k+8$ labelled balls into 3 or 4 indistinguishable boxes with at least 2 balls in each box  are identified  in the OEIS  by
\href{http://oeis.org/A000478}{A000478}, \href{https://oeis.org/A058844}{A058844}.

There are well-known connections between Stirling numbers of the second kind and probability theory. 
For example, sequences $S_1(p,m)$ and $S_2(p,m)$ are asymptotically  normal when $p$ tends to $+\infty$ \cite{har,czab}. More precisely,
when $r\in \{1,2\}$,  the following convergence in distribution occurs
\[\frac{Y_p-\E(Y_p)}{\sqrt{{\rm var}(Y_p)}}\overset{d}{\underset{p\to +\infty}{\longrightarrow}}\mathcal{N}(0,1)\text{ where } \P(Y_p=m)=\frac{S_r(p,m)}{\sum_{k=1}^{p}S_r(p,k)}, \text{ for all }m\in \N_{>0}.\] 
Otherwise, according to Dobiński's formula,
 the moment of order $p$ of a Poisson distribution with parameter $\lambda\ge 0$ is $\sum_{m=1}^{p}S_1(p,m)\lambda^m$ (see, e.g. \cite[p. 211]{comtet}). 
However, to our knowledge, there is no close formula in the literature for $S_r(k,m)$ based on moments of a sum of independent and identically distributed (i.i.d) random variables. The main result in this article is Theorem \ref{thm:identity} 
providing the following new identity
\begin{equation}\label{eq:id}
S_r(p,m)=\frac{p!}{m!(r!)^m(p-rm)!}
\E\left[\left(X_1+\dots+X_m\right)^{p-rm}\right]
\end{equation}
where $X_1,\dots,X_p$ are i.i.d random variables having beta distribution with parameter $(1,r)$. 
Note that the beta distribution with parameter $(1,1)$ is a uniform distribution on $[0,1]$. Thus, when $r=1$, 
the above formula  is quite simple:
\[S_1(p,m)=\binom{p}{m}
\E\left(Z^{p-m}\right)\] 
 where $Z=\sum_{i=1}^m X_i$ has the Irwin-Hall distribution on $[0,m]$. 
Propositions \ref{prop:bounds_Jansen}, 
\ref{prop:asym} and \ref{prop:bounds_expo} give  upper and lower bounds for  $\E\left[\left(X_1+\dots+X_m\right)^{p-rm}\right]$. These bounds are sharp when one parameter $m$, $r$, or $p-rm$ tends to $+\infty$ and provides thus accurate approximations of $r-$associated Stirling numbers. 
\section{Closed formula for Stirling numbers and moments of random variables}
The density $g_r$ of a beta  distribution with parameters $(1,r)$ where $r\in \N_{>0}$ is 
\begin{equation} \label{beta}
g_r(x) = \begin{cases}
r(1-x)^{r-1} &\text{if }x\in[0,1]\\
0 &\text{otherwise}
\end{cases}.
\end{equation}
Let $X_1,\dots, X_m$ be independent random variables having the same beta $(1,r)$ distribution. The moment of order $k\in\N$ of the sum of these variables is defined as follows
\begin{equation}
\label{eq:mom0}
\V_r(k,m)=\E\left[\left(X_1+\dots+X_m\right)^k\right].
\end{equation}
Theorem \ref{thm:identity} provides a closed formula for the Stirling numbers of the second kind based on the moment $\V_r(k,m)$. 
\begin{theorem}
\label{thm:identity} Let $m,r\in \N_{>0}$ and $p\in \N$ where $p\ge rm$. The Stirling numbers of the second kind satisfy the following identity
\begin{equation}
S_r(p,m)=\frac{p!}{m!(r!)^m(p-rm)!}
\V_r(p-rm,m).
\end{equation}
\end{theorem}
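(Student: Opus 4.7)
The plan is to use exponential generating functions. It is classical that the $r$-associated Stirling numbers are encoded by
\[
\sum_{p \geq rm} S_r(p,m)\frac{x^p}{p!} \;=\; \frac{1}{m!}\,f(x)^m, \qquad f(x) := \sum_{k \geq r}\frac{x^k}{k!} \;=\; e^x - \sum_{k=0}^{r-1}\frac{x^k}{k!},
\]
since a partition into $m$ blocks of size $\geq r$ corresponds, via EGF composition, to $m$ unordered nonempty ``clusters'' each of EGF $f$. Thus $S_r(p,m) = \frac{p!}{m!}\,[x^p]\,f(x)^m$, and the theorem reduces to showing that $[x^p]\,f(x)^m = \frac{1}{(r!)^m\,(p-rm)!}\,\V_r(p-rm,m)$.

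The bridge between $f$ and the beta distribution comes from computing the moment generating function of $X \sim \text{Beta}(1,r)$. Using $\E[X^k] = r\,B(k+1,r) = \frac{r!\,k!}{(k+r)!}$, I would expand
\[
\E\!\left[e^{tX}\right] \;=\; \sum_{k \geq 0}\frac{t^k}{k!}\,\E[X^k] \;=\; \sum_{k \geq 0}\frac{r!\,t^k}{(k+r)!} \;=\; \frac{r!}{t^r}\sum_{j \geq r}\frac{t^j}{j!} \;=\; \frac{r!\,f(t)}{t^r}.
\]
This is the one nontrivial step, and really the crux of the argument: it exhibits $f$, up to an explicit monomial factor, as the MGF of Beta$(1,r)$. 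Equivalently one can verify it by the substitution $u = 1-x$ in $\int_0^1 e^{tx}r(1-x)^{r-1}\,dx$ and integrating the series term-by-term; absolute convergence on any bounded $t$-interval justifies the interchange.

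Having this identity, independence of $X_1,\dots,X_m$ gives
\[
f(t)^m \;=\; \frac{t^{rm}}{(r!)^m}\,\E\!\left[e^{t(X_1+\cdots+X_m)}\right] \;=\; \frac{t^{rm}}{(r!)^m}\sum_{k \geq 0}\frac{t^k}{k!}\,\V_r(k,m).
\]
Reading off the coefficient of $t^p$ (which exists precisely when $p \geq rm$, matching the domain of the statement) yields
\[
[x^p]\,f(x)^m \;=\; \frac{1}{(r!)^m\,(p-rm)!}\,\V_r(p-rm,m),
\]
and multiplying by $p!/m!$ delivers the claimed formula. The main obstacle is simply spotting the MGF computation above; everything else is formal coefficient extraction from the standard EGF. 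I would not need any estimate on tails or convergence beyond what comes for free from the fact that the MGF of a bounded random variable is entire.
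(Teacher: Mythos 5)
Your proof is correct, and every step checks out: $\E[X^k]=\frac{k!\,r!}{(k+r)!}$ is right, the series manipulation giving $\E[e^{tX}]=r!f(t)/t^r$ is valid (the interchange is justified since $X$ is bounded, as you note), and the coefficient extraction is clean. That said, it is better described as a generating-function repackaging of the paper's argument than as an independent route, because the two proofs rest on exactly the same two ingredients. Your starting identity $\sum_{p}S_r(p,m)x^p/p!=f(x)^m/m!$ is, coefficient by coefficient, precisely the paper's Lemma \ref{lem:identity}: expanding $f(x)^m$ by the multinomial theorem gives
\[
[x^p]\,f(x)^m=\sum_{i_1+\dots+i_m=p-rm}\frac{1}{(r+i_1)!\cdots(r+i_m)!},
\]
which is the lemma's sum. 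And your MGF factorization $\E[e^{t(X_1+\cdots+X_m)}]=\E[e^{tX_1}]^m$ is the ``all $k$ at once'' form of the paper's multinomial expansion of $\E[(X_1+\cdots+X_m)^k]$ into products of the moments $\E[X^s]=\frac{s!\,r!}{(s+r)!}$. What your version buys: it handles all $p$ simultaneously, it makes the exponential generating function of $S_r(\cdot,m)$ (which the paper only recovers a posteriori, and only for $r=1$) the organizing principle, and it isolates the single genuinely probabilistic observation --- that $r!\,f(t)/t^r$ is the moment generating function of the beta $(1,r)$ distribution --- in one line. What the paper's version buys: it is self-contained, since the combinatorial lemma is proved from scratch rather than quoted as classical (if you were required to justify the EGF identity, you would end up reproving Lemma \ref{lem:identity}), and it never leaves finite sums, so no convergence discussion is needed. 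One small wording quibble: the coefficient of $t^p$ ``exists'' for every $p$; what you mean is that it is given by $\V_r(p-rm,m)/((r!)^m(p-rm)!)$ exactly when $p\ge rm$ and vanishes otherwise, which indeed matches the theorem's hypothesis.
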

From Theorem \ref{thm:identity}, one may deduce that $\E(Z^k)=S_1(m+k,m)/\binom{m+k}{m}$ where $Z$ has an Irwin-Hall's distribution on $[0,m]$ \cite{irwin1927frequency,hall1927distribution}. Note that the moment generating function of $Z$ is $\sum_{k\ge 0}\E(Z^k)t^k/k!=((\exp(t)-1)/t)^m$ therefore we recover the well-known exponential generating function of the Stirling numbers of the second kind $\sum_{p\ge m}S_1(p,m)t^p/p!=(\exp(t)-1)^m/m!$ 
(see Theorem 3.3 page 52 in \cite{mansour2015commutation}).
 The above expression of $S_r(p,m)$ is explicit up to the computation of the moment $\V_r(k,m)$. 
Whereas computing explicitly $\V_r(k,m)$ might be technical,
 lower bounds, upper bounds and approximations of $\V_r(k,m)$ are tractable as illustrated in the following section.

\section{Upper and lower bounds}

Hereafter, we will use probabilistic approaches to derive upper and lower bounds for the moment $\V_r(k,m)$. 
 \subsection{Sharp upper and lower bounds when $m$ is large}
 Let $\overline{X}_m=(X_1+\dots+X_m)/m$, the Jensen's inequality provides the following lower bound 
 \begin{equation}\label{lower}
 \V_r(k,m)=m^k \E\left({\overline{X}_m}^k\right) \ge m^k \E\left({\overline{X}_m}\right)^k =\frac{m^k}{(r+1)^k}.
 \end{equation}
 This inequality relies on the linearization of the function $q(x)=x^k$ at $x_0=\E(\overline{X}_m)=\frac{1}{r+1}$. Specifically, the following inequality holds for all $x\in [0,1]$
\begin{equation}
\label{eq:Jansen_LB}
 x^k=q(x)\ge q(x_0) + q'(x_0)(x-x_0) = \frac{1}{(r+1)^k}+\frac{k}{(r+1)^{k-1}}\left(x-\frac{1}{r+1}\right).
\end{equation}
Moreover one may choose $c \ge 0$ for which the following inequality is true for all $x\in [0,1]$ (see Lemma \ref{lem:c})
\begin{equation}
\label{eq:Jansen_UB}
 x^k\le q(x_0) + q'(x_0)(x-x_0) + c (x-x_0)^2.
\end{equation}

Proposition \ref{prop:bounds_Jansen} below is a consequence of inequalities \eqref{eq:Jansen_LB} and \eqref{eq:Jansen_UB}. 

\begin{proposition}
\label{prop:bounds_Jansen}
Let $r,m\in \N_{>0}$ and $k\in \N$. The following inequality holds
 \begin{equation}
  \frac{m^k}{(r+1)^k}\le \V_r(k,m) \le 
  \frac{m^k}{(r+1)^k} + m^{k-1}\frac{(r + 1)^k - 1 - k r}{(r + 1)^k r (r+2)}.
 \end{equation}
\end{proposition}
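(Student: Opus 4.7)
The plan is to apply the two pointwise bounds \eqref{eq:Jansen_LB} and \eqref{eq:Jansen_UB} at the random variable $\overline{X}_m$. Since each $X_i$ is beta$(1,r)$--distributed, $X_i \in [0,1]$ almost surely, so $\overline{X}_m \in [0,1]$ and both inequalities are applicable pointwise. The key simplification at every step is that $x_0 = 1/(r+1) = \E(\overline{X}_m)$, which makes the linear term vanish in expectation.

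For the lower bound, substituting $x = \overline{X}_m$ into \eqref{eq:Jansen_LB} and taking expectations gives $\E(\overline{X}_m^k) \ge 1/(r+1)^k$, so multiplying by $m^k$ recovers \eqref{lower}. (This step is essentially already performed in the excerpt.) For the upper bound, I substitute $x = \overline{X}_m$ into \eqref{eq:Jansen_UB}, take expectations, and use $\E(\overline{X}_m - x_0) = 0$ together with $\E((\overline{X}_m - x_0)^2) = \mathrm{var}(\overline{X}_m) = \mathrm{var}(X_1)/m$ to obtain
\[
\E\bigl(\overline{X}_m^k\bigr) \;\le\; \frac{1}{(r+1)^k} \;+\; c \cdot \frac{\mathrm{var}(X_1)}{m}.
\]

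Next I would compute $\mathrm{var}(X_1)$ by direct integration against the density $g_r$ in \eqref{beta}, obtaining $\E(X_1) = 1/(r+1)$, $\E(X_1^2) = 2/((r+1)(r+2))$, and hence $\mathrm{var}(X_1) = r/((r+1)^2(r+2))$. Substituting the value of the constant $c$ supplied by Lemma \ref{lem:c} (which, to match the form of the claimed bound, must be $c = ((r+1)^k - 1 - kr)(r+1)^{2-k}/r^2$, corresponding to the quadratic majorant that is tight at $x=1$), and multiplying the resulting inequality by $m^k$, gives
\[
\V_r(k,m) \;\le\; \frac{m^k}{(r+1)^k} \;+\; m^{k-1} \cdot \frac{(r+1)^k - 1 - kr}{(r+1)^k\, r\,(r+2)}
\]
after routine simplification.

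The only nontrivial ingredient is Lemma \ref{lem:c} itself, which I would expect to be the main obstacle: proving that the quadratic $q(x_0)+q'(x_0)(x-x_0)+c(x-x_0)^2$ with the above $c$ dominates $x^k$ on $[0,1]$. The natural approach is to study $h(x) := q(x_0)+q'(x_0)(x-x_0)+c(x-x_0)^2 - x^k$ on $[0,1]$: by construction $h(x_0)=h'(x_0)=0$ and, with the chosen $c$, $h(1)=0$ as well, so the task reduces to verifying that $h \ge 0$ on $[0,1]$, for instance by analyzing the sign of $h''(x) = 2c - k(k-1)x^{k-2}$ and using that $x^{k-2}$ is monotone to locate the unique interior critical points of $h$. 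Once Lemma \ref{lem:c} is granted, the argument above is a short deterministic-to-probabilistic pipeline and the final algebra is immediate.
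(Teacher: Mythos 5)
Your proof is correct and follows essentially the same route as the paper: Jensen's inequality (via the tangent line at $x_0=1/(r+1)$) for the lower bound, and for the upper bound the quadratic majorant of $x^k$ that is tangent at $x_0$ and tight at $x=1$, followed by taking expectations so that only $\mathrm{var}(\overline{X}_m)=\mathrm{var}(X_1)/m$ with $\mathrm{var}(X_1)=r/((r+1)^2(r+2))$ survives; your value of $c$ agrees with the paper's $c=(1+\tfrac1r)^2\bigl(1-\tfrac{kr+1}{(r+1)^k}\bigr)$. The only divergence is in how you would prove Lemma \ref{lem:c}: you sketch a critical-point analysis of $h(x)=f(x)-x^k$ using that $h''(x)=2c-k(k-1)x^{k-2}$ is nonincreasing (hence $h'$ is concave with a zero at $a$, which does close the argument after a short case analysis), whereas the paper divides out the factor $(x-a)$ and compares the affine function $d(x)=ka^{k-1}+c(x-a)$ with the convex polynomial $p(x)=x^{k-1}+ax^{k-2}+\dots+a^{k-1}$, which agree at $x=a$ and $x=1$ --- a cleaner device that avoids locating critical points. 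Both arguments are valid.
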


The leading term when $m$ is large in both the lower and upper bounds is $m^k/(r+1)^k$. Therefore, lower and upper bounds are asymptotically equivalent when $k\in \N_{\ge 0}$ and $r\in \N_{>0}$ are fixed and 
$m$ tends to $+\infty$. These bounds are accurate
when $m$ is large since $\overline{X}_m$ converges to $\E(\overline{X}_m)$ and both inequalities \eqref{eq:Jansen_LB} and  \eqref{eq:Jansen_UB}
are accurate on the neighbourhood of $\E(\overline{X}_m)$.

\subsection{Sharp upper and lower bounds when $k$ is large}

Asymptotic behaviour of moments, when $k$ is large, depends on the density of $X_1+\dots+X_m$ on the tail, \emph{i.e.} on the neighbourhood of $m$. This motivates us to introduce the following inequality proved in  Corollary \ref{cor:tail}: 
\begin{equation}
g_r^{*m}(x)\le \frac{(r!)^m}{(mr-1)!}(m-x)^{mr-1}, \text{ for all }x\in [0,m]
\end{equation} 
where $g_r$ is given by \eqref{beta} and $g_r^{*m}$ is $m$-th convolution of $g_r$. Moreover, this inequality is an equality for $x\in [m-1,m]$. We derive from this fact a lower and upper bounds for $\V_r(k,m)$ given in Proposition \ref{prop:asym} below. 

\begin{proposition}\label{prop:asym}
For any $r\in \N_{>0}$, for any $k\in \N$ for any $m\in \N_{>0}$
the following inequalities hold.
\begin{eqnarray*}
 \V_{r}(k,m) &\le&  \frac{(r!)^m}{(mr-1)!}\int_{0}^{m}x^{k}(m-x)^{mr-1}dx = \frac{k!(r!)^m m^{k+rm}}{(k+mr)!}.\\
\V_{r}(k,m) &\ge& \frac{(r!)^m}{(mr-1)!}\int_{m-1}^{m}x^{k}(m-x)^{mr-1}dx\\ 
 &\ge & \frac{k!(r!)^m m^{k+mr} }{(k+mr)!}\left(1 - \frac{(m-1)^k}{m^{k+mr}}\sum_{i=1}^{mr} \binom{k+mr}{k+i}(m-1)^{i} \right).
\end{eqnarray*}
These bounds are accurate when $k$ is large since $$\lim_{k\to+\infty} \frac{(m-1)^k}{m^{k+mr}}\sum_{i=1}^{mr} \binom{k+mr}{k+i}(m-1)^{i}=0.$$
\end{proposition}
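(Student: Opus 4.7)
The key ingredient is Corollary~\ref{cor:tail}, which simultaneously supplies the pointwise inequality $g_r^{*m}(x) \le \frac{(r!)^m}{(mr-1)!}(m-x)^{mr-1}$ on $[0,m]$ and the fact that this is an equality on $[m-1,m]$. Writing $\V_r(k,m) = \int_0^m x^k g_r^{*m}(x)\,dx$, both inequalities in the proposition arise by inserting this into the integrand. For the upper bound, I would substitute the inequality globally and evaluate $\int_0^m x^k (m-x)^{mr-1}\,dx$ via the change of variable $x = m y$: this reduces to $m^{k+mr} B(k+1,mr) = m^{k+mr} k!(mr-1)!/(k+mr)!$, which after multiplication by $(r!)^m/(mr-1)!$ gives the stated closed form.

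For the first lower bound I would invoke the equality case of Corollary~\ref{cor:tail}: since $g_r^{*m}(x) = \frac{(r!)^m}{(mr-1)!}(m-x)^{mr-1}$ \emph{exactly} on $[m-1,m]$, restricting the expectation integral from $[0,m]$ to $[m-1,m]$ immediately yields $\V_r(k,m) \ge \frac{(r!)^m}{(mr-1)!}\int_{m-1}^m x^k(m-x)^{mr-1}\,dx$.

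The more explicit second lower bound then follows by computing $\int_{m-1}^m = \int_0^m - \int_0^{m-1}$ and evaluating the integral on $[0,m-1]$ exactly. The natural change of variable here is $x = (m-1)(1-u)$, $u \in [0,1]$, giving $m - x = 1 + (m-1)u$ and turning the integral into $(m-1)^{k+1}\int_0^1 (1-u)^k (1+(m-1)u)^{mr-1}\,du$. Expanding $(1+(m-1)u)^{mr-1}$ by the binomial theorem and integrating termwise via $\int_0^1 u^i (1-u)^k\,du = i!k!/(i+k+1)!$ produces a finite sum; reindexing $j = i+1$ and using $\frac{(mr-1)!\,k!}{(mr-j)!(k+j)!} = \frac{k!(mr-1)!}{(k+mr)!}\binom{k+mr}{k+j}$ rearranges the result into the sum $\sum_{j=1}^{mr}\binom{k+mr}{k+j}(m-1)^j$ appearing in the statement. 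Incidentally this computation produces equality in the second ``$\ge$'', so the displayed inequality is actually tight.

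For the asymptotic claim, fix $m,r$ and let $k \to \infty$. If $m = 1$ the sum is zero for $k \ge 1$ because $(m-1)^k = 0$. If $m \ge 2$, each $\binom{k+mr}{k+i} = O(k^{mr-i})$, while the prefactor $(m-1)^k/m^{k+mr}$ contributes $(1-1/m)^k$ up to a constant, whose exponential decay swamps any polynomial factor in $k$. The main obstacle is the algebra of the third paragraph: one has to pick the right substitution so the expansion yields the binomial coefficients $\binom{k+mr}{k+i}$ present in the statement, rather than the $\binom{mr-1}{i}$ coefficients that appear after a naive binomial expansion of $(m-x)^{mr-1}$.
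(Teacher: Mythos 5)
Your proposal is correct and follows essentially the same route as the paper: both rest on Corollary~\ref{cor:tail}, evaluate the integrals via the beta integral after expanding $(m-x)^{mr-1}$ around $m-1$ (your substitution $x=(m-1)(1-u)$ is just a reparametrisation of the paper's expansion of $(1+(m-1)-x)^{mr-1}$), and conclude the limit by noting that $(1-1/m)^k$ beats the polynomial growth of the binomial coefficients. The only cosmetic difference is the final estimate, where the paper bounds $\binom{k+mr}{k+i}\le\binom{k+mr}{k}$ and sums a geometric series instead of your term-by-term $O(k^{mr-i})$ count; both are valid.
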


As a consequence of Proposition \ref{prop:asym}, we observe that $S_{r}(p,m)\le m^p/m!$. Moreover, lower and upper bounds are asymptotically equivalent when $k$ tends to $+\infty$ therefore $S_r(p,m)\sim m^{p}/m!$ when $p$ is large. This approximation, well known when $r=1$ (see \cite{asym}), remains true when $r>1$. 

\subsection{Sharp upper bound when $r$ is large}
Proposition \ref{prop:bounds_expo} proves that the moment $\V_r(k,m)$
is bounded, up to an explicit expression, by the moment of a sum of independent random variables having the same standard exponential distribution.
\begin{proposition}
\label{prop:bounds_expo}
Let $k \in \N$, $m\in \N_{>0}$, $r\in \N_{>1}$ and $\mathcal{E}_1,\dots,\mathcal{E}_m$ be i.i.d random variables having standard exponential distribution with density $\exp(-x)$.
\begin{itemize}
    \item[i)] The following inequality holds
    \begin{equation}
    r^k\V_r(k,m)\le \left(\frac{r}{r-1}\right)^{2k} \E\left(\mathcal{E}_1+\dots+\mathcal{E}_m\right)^k =\left(\frac{r}{r-1}\right)^{2k} \frac{(m-1+k)!}{(m-1)!}.
    \end{equation}
    \item[ii)] The upper bound given in i) is sharp since the following limit holds 
    \begin{equation}
    \lim_{r\to +\infty} r^{k}\V_r(k,m)=\E\left(\mathcal{E}_1+\dots+\mathcal{E}_m\right)^k= \frac{(m-1+k)!}{(m-1)!}.
    \end{equation}
\end{itemize}
\end{proposition}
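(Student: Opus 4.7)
The plan is to build a pointwise coupling between each beta$(1,r)$ variable $X_i$ and a standard exponential $\mathcal{E}_i$ such that $rX_i \le \mathcal{E}_i$ almost surely, and then let monotonicity handle i) and dominated convergence handle ii). Concretely, take $U_1,\ldots,U_m$ i.i.d.\ uniform on $[0,1]$ and set
\[
X_i := 1 - U_i^{1/r}, \qquad \mathcal{E}_i := -\log U_i.
\]
A one-line density computation shows that $X_i$ has the beta$(1,r)$ distribution, while $\mathcal{E}_i$ is standard exponential by construction. The elementary inequality $\log v \le v-1$ (for $v>0$), applied at $v=U_i^{1/r}$, yields
\[
\mathcal{E}_i = -\log U_i = -r \log(U_i^{1/r}) \ge r\bigl(1 - U_i^{1/r}\bigr) = rX_i,
\]
so $rX_i \le \mathcal{E}_i$ pointwise on this common probability space.

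For part i), summing the coupling inequality over $i$ and raising to the $k$-th power (both sides non-negative) gives $r^k(X_1+\cdots+X_m)^k \le (\mathcal{E}_1+\cdots+\mathcal{E}_m)^k$. Taking expectations, and recognising that $\mathcal{E}_1+\cdots+\mathcal{E}_m \sim \Gamma(m,1)$, I obtain
\[
r^k \V_r(k,m) \le \E\left[(\mathcal{E}_1+\cdots+\mathcal{E}_m)^k\right] = \frac{(m+k-1)!}{(m-1)!}.
\]
This is in fact sharper than the stated bound, but since $(r/(r-1))^{2k} \ge 1$ it immediately implies the inequality in i).

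For part ii), the same coupling gives the pointwise convergence $r(1-U_i^{1/r}) = r(1 - e^{(1/r)\log U_i}) \to -\log U_i = \mathcal{E}_i$ as $r \to \infty$, and one can even check (via $1 - e^{-t}(1+t) \ge 0$) that the convergence is monotone increasing in $r$. Combined with the uniform domination $rX_i \le \mathcal{E}_i$ and the integrability of $(\mathcal{E}_1+\cdots+\mathcal{E}_m)^k$, the dominated (equivalently monotone) convergence theorem gives
\[
r^k \V_r(k,m) \longrightarrow \E\left[(\mathcal{E}_1+\cdots+\mathcal{E}_m)^k\right] = \frac{(m+k-1)!}{(m-1)!},
\]
as required.

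The only non-routine ingredient is spotting the joint representation $(1-U^{1/r},-\log U)$, which couples the two laws on a single probability space so that one convexity inequality controls both parts simultaneously; the $\Gamma(m,1)$-moment computation and the convergence theorem application are standard, and I do not anticipate any real obstacle.
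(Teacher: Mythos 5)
Your proof is correct, and it takes a genuinely different route from the paper. The paper rescales by $r-1$ and compares \emph{densities}: it shows the density of $(r-1)X_i$ is pointwise at most $\tfrac{r}{r-1}e^{-x}$ via $(1-\tfrac{x}{r-1})^{r-1}\le e^{-x}$, then pushes this through the $m$-fold convolution to bound the moment and applies dominated convergence to the convolved density $x^k f^{*m}(x)$. You instead build a monotone \emph{coupling} on a single probability space, $X_i=1-U_i^{1/r}$ and $\mathcal{E}_i=-\log U_i$, so that $rX_i\le\mathcal{E}_i$ almost surely; all verifications check out (the CDF of $1-U^{1/r}$ is $1-(1-x)^r$, the inequality $\log v\le v-1$ gives the domination, and $1-e^{-s}(1+s)\ge0$ does give monotonicity in $r$). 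This buys you two things. First, a strictly sharper version of i): $r^k\V_r(k,m)\le\frac{(m+k-1)!}{(m-1)!}$ with no $\left(\frac{r}{r-1}\right)^{2k}$ loss at all (and it even holds for $r=1$, where the paper's argument degenerates); combined with ii) this shows the limit in ii) is approached from below, which is a cleaner statement of sharpness than the paper's. Second, the limit in ii) becomes an immediate application of dominated (or monotone) convergence to the random variables themselves, with the integrable dominating function $\left(\mathcal{E}_1+\dots+\mathcal{E}_m\right)^k$ handed to you by part i), whereas the paper has to justify domination of the convolution densities, a step it treats somewhat briskly. The only cosmetic remark is that you should state explicitly that the sharper bound implies the displayed one because $\left(\frac{r}{r-1}\right)^{2k}\ge1$ --- which you do.
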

It seems difficult for authors to find lower bounds which are sharp when $r$ tends to $+\infty$. Finally, we recap hereafter lower and upper bounds for $r-$associated Stirling number of the second kind:
\begin{itemize}
\item Proposition \ref{prop:bounds_Jansen} provides the following lower and upper bounds
\begin{equation}
\begin{cases}
   S_r(p,m) \ge \frac{p!m^{p-rm}}{m!(r!)^m(p-rm)!(r+1)^{p-rm}}\\
  S_r(p,m) \le \frac{p!m^{p-rm}}{m!(r!)^m(p-rm)!(r+1)^{p-rm}}\left(1+\frac{(r+1)^{p-rm}-1-r(p-rm)}{mr(r+2)} \right).
\end{cases}
\end{equation}
These bound are equivalent when $p-rm$, $r$ are fixed and when $m$ tends to $+\infty$. 
\item Proposition \ref{prop:asym} provides the following lower and upper bounds
\begin{equation}
\begin{cases}
   S_r(p,m) \ge \frac{m^p}{m!} - \frac{(m-1)^{p-rm}}{m!}\sum_{i=1}^{mr} \binom{p}{p-rm+i}(m-1)^{i}\\
  S_r(p,m) \le \frac{m^p}{m!}.
\end{cases}
\end{equation}
These bound are equivalent when $m, r$ are fixed and when $p$ tends to $+\infty$. 
 \item Proposition \ref{prop:bounds_expo} provides the following  upper bound when $r\ge 2$
\begin{equation}
S_r(p,m)\le \frac{p!r^{2(p-rm)}(m-1+p-rm)!}{m!(r!)^m(p-rm)!(r-1)^{2(p-rm)}(m-1)!}.
\end{equation}
This upper bound is equivalent to $S_r(p,m)$ when $p-rm, m$ are fixed and when $r$ 
tends to $+\infty$. 
\end{itemize}

\section{Numerical experiments}
\subsection{Upper and lower bounds of Stirling numbers of the second kind}

According to Propositions \ref{prop:bounds_Jansen} and \ref{prop:asym}, for all $m\in \N$ and all $p\in \N_{>0}$, Stirling numbers of the second kind do satisfy the following inequalities: 
\begin{eqnarray*}
S_1(p,m)&\le& \underbrace{\min\left\{\frac{m^p}{m!}, \binom{p}{m}\left(\frac{m}{2}\right)^{p-m}\left(1+\frac{2^{p-m}+m-p-1}{3m}\right)\right\}}_{U(p,m)}\\
S_1(p,m)&\ge& \underbrace{\max\left\{\frac{m^p}{m!} - \frac{(m-1)^{p-m}}{m!}\sum_{i=1}^{m} \binom{p}{m-i}(m-1)^{i}, \binom{p}{m}\left(\frac{m}{2}\right)^{p-m}\right\}}_{L(p,m)}.
\end{eqnarray*}
First of all we are going to compare these bounds $U(m,p)$ and  $L(p,m)$ to bounds 
given in Rennie and Dobson \cite{rennie1969stirling} reported below:
\begin{equation}
\underbrace{\frac{1}{2}(m^2+m+2)m^{p-m-1}-1}_{L_{\rm rd}(p,m)}\le S_1(p,m) \le \underbrace{\frac{1}{2}\binom{p}{m}m^{p-m}}_{U_{\rm rd}(p,m)}.
\end{equation}
Numerical comparison between $U(p,m)$ and $ U_{\rm rd}(p,m)$ is not relevant since  
\[
\frac{1}{2}\binom{p}{m}m^{p-m} \ge \binom{p}{m}\left(\frac{m}{2}\right)^{p-m}\left(1+\frac{2^{p-m}+m-p-1}{3m}\right)
\] 
holds for $m<p$. Contrarily to the upper bound, the lower bound $L(p,m)$ is not uniformly larger than the one given by Rennie and Dobson; for instance, $31=L_{\rm rd}(6,2)> L(6,2)=28.5$. Numerical experiments in Figure \ref{fig:Figure_1} illustrate that for most integers $p,m$, $L(p,m)$ is a better approximation of $S_1(p,m)$ than $L_{\rm rd}(p,m)$. 

\begin{figure}
\centering
\includegraphics[scale=0.6]{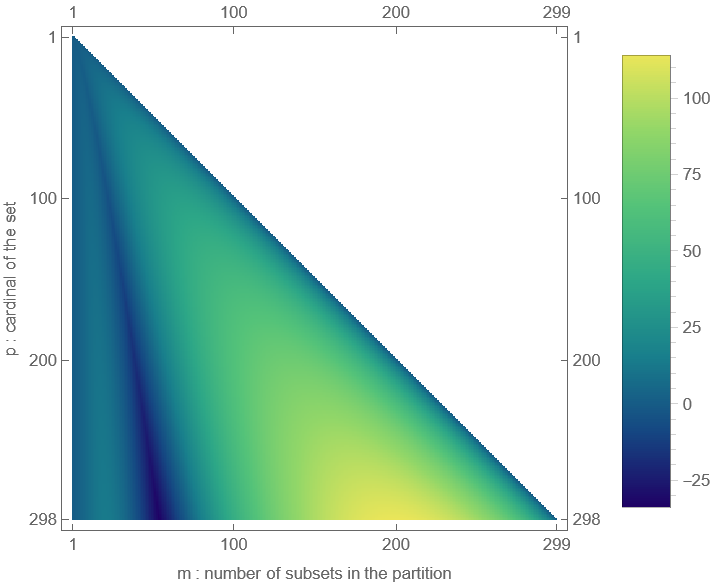}
\caption{This figure report  $\ln(L(p,m))-\ln(L_{\rm rd}(p,m))$
    as a function of $m$ (on the $x-$axis) and $p$ (on the $y-$axis). One may observe that for most integers the lower bound $L(p,m)$ is a better approximation of $S_1(p,m)$ than $L_{\rm rd}(p,m)$ (as  $\ln(L(p,m))-\ln(L_{\rm rd}(p,m))>0$).}
\label{fig:Figure_1}
\end{figure}

Figure \ref{fig:Figure_2} provides a comparison between $L(p,m), U(p,m)$ and $S_1(p,m)$.

\begin{figure}
\centering
\begin{tabular}{c c}
      \includegraphics[scale=0.5]{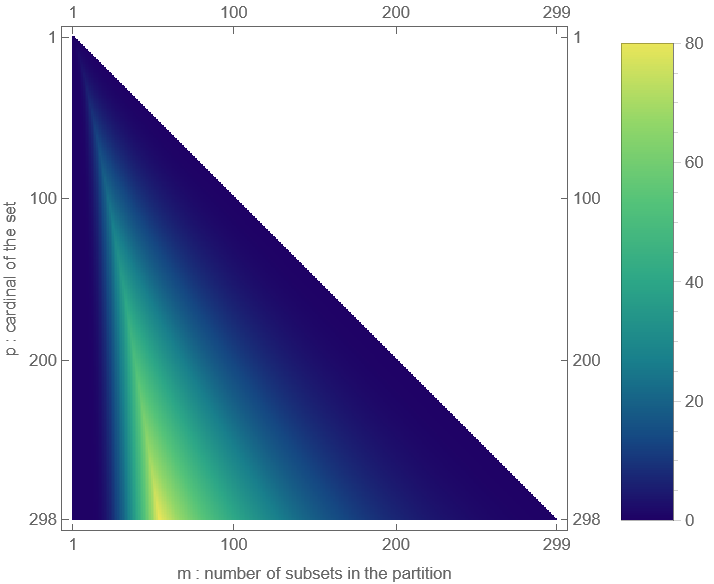} &
       \includegraphics[scale=0.5]{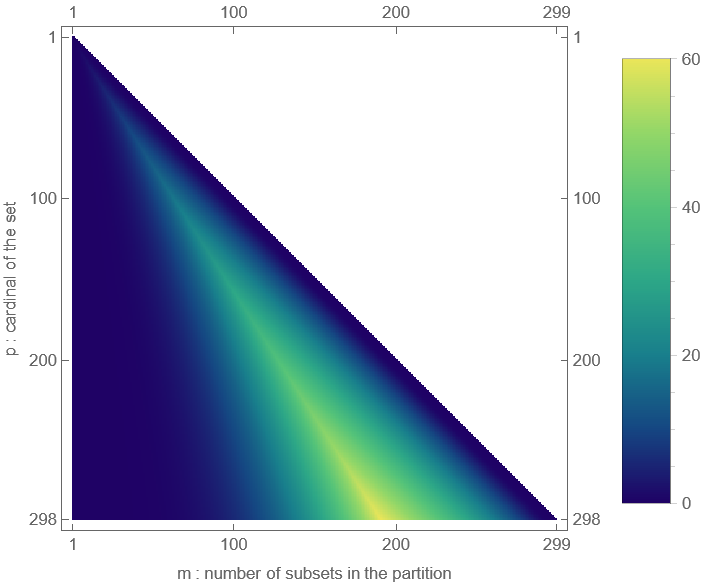} 
    \end{tabular}
\caption{ This figure report  $\ln(S_1(p,m))-\ln(L(p,m))$ (on the left) and $\ln(U(p,m))-\ln(S_1(p,m))$ (on the right) as a function of $m$  and $p$.  These numerical experiments comply with Propositions \ref{prop:bounds_Jansen} and \ref{prop:asym} since both lower and upper bounds accurately approximate $S_1(p,m)$ when $p$ is large and $m$ is small or when $m$ is large and $p-m$ is small.}
\label{fig:Figure_2}
    
\end{figure}

\subsection{Upper bounds of Bell numbers}
The Bell number $B(p)$, where $p\in \N_{>0}$, represents the number of partition of $\{1,\dots,p\}$. Since a Bell number is a sum of Stirling numbers of the second kind 
$B(p)= \sum_{m=1}^{p}S_1(p,m)$ then, the following inequality occurs:
\begin{equation}
B(p)\le \underbrace{\sum_{m=0}^{p}
\min\left\{\frac{m^p}{m!}, \binom{p}{m}\left(\frac{m}{2}\right)^{p-m}\left(1+\frac{2^{p-m}+m-p-1}{3m}\right)\right\}}_{=U(p)}
\end{equation}
for all $p\in \N_{>0}$. 
In Figure \ref{fig:Figure_3} we compare $U(p)$ with the upper bound: 
$B(p)\le U_{\rm bt}(p) = \left( \frac{0.792p}{\ln(p+1)} \right)^p$ given in Berend and Tassa \cite{berend2010improved}.

\begin{figure}
\centering
\includegraphics[scale=0.5]{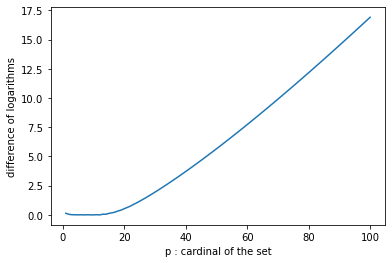}
\caption{This figure report  $\ln(U_{bt}(p))-\ln(U(p))$
    as a function of $p$. One may observe that when $p\ge 13$, $U(p)$ is more accurate upper bound for $S_1(p,m)$ than $U_{\rm bt}(p)$ (as $\ln(U_{bt}(p))-\ln(U(p))>0$ for $p\ge 13$). }
\label{fig:Figure_3}
\end{figure}

Note that $U(p)\le \sum_{m=0}^{+\infty}m^p/m!=eB(p)$ (the last equality is due to the Dobi\'nski formula). In Figure \ref{fig:Figure_4}
we show that $U(p)/B(p)$ is very close to $e$ when $p$ is large.
\begin{figure}
\centering
\includegraphics[scale=0.5]{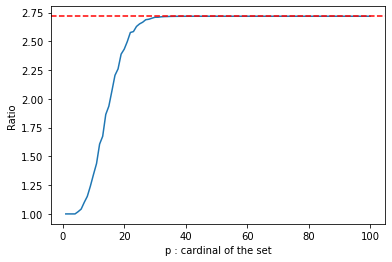}
\caption{This figure report  $U(p)/B(p)$
    as a function of $p$. One may observe that $U(p)/B(p)$ is approximately equal to $e$ when $p$ is large. }
\label{fig:Figure_4}
\end{figure}

\section{Proofs}

\subsection{Proof of Theorem \ref{thm:identity}}

The identity given in Lemma \ref{lem:identity} below, combined with the 
multinomial formula, allows us to complete the proof of Theorem \ref{thm:identity}.

\begin{lemma}
\label{lem:identity}
Let $m,r\in \N_{>0}$ and $p\in \N$ where $p\ge rm$. The $r$-associated Stirling numbers of the second kind satisfy the following equality
\[S_r(p,m)=\frac{p!}{m!}\sum_{i_1+\dots+i_m=p-rm}\frac{1}{(r+i_1)!\times\ldots\times(r+i_m)!}\]
where the sum is computed over all the integers $i_1,\dots,i_m\in \{0,\dots,p-rm\}$ satisfying $i_1+\dots+i_m=p-rm$.
\end{lemma}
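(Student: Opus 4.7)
The plan is to prove the lemma by double-counting ordered set partitions. Let $N$ denote the number of sequences $(A_1,\dots,A_m)$ of pairwise disjoint nonempty subsets of $\{1,\dots,p\}$ with $A_1\cup\dots\cup A_m = \{1,\dots,p\}$ and $|A_\ell|\ge r$ for every $\ell$. I will compute $N$ in two different ways and equate the results.

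First, every unordered partition counted by $S_r(p,m)$ gives rise to exactly $m!$ ordered such sequences, since all $m$ blocks are distinct nonempty sets; hence $N = m!\,S_r(p,m)$. Second, I group the sequences according to the composition of block sizes: fix $(j_1,\dots,j_m)\in\N_{>0}^m$ with $j_1+\dots+j_m=p$ and each $j_\ell\ge r$, and observe that the number of ordered partitions with $|A_\ell|=j_\ell$ is the multinomial coefficient $\binom{p}{j_1,\dots,j_m}=p!/(j_1!\cdots j_m!)$. Summing over all admissible compositions gives
\[
N \;=\; \sum_{\substack{j_1+\dots+j_m=p\\ j_\ell\ge r}} \frac{p!}{j_1!\cdots j_m!}.
\]

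The final step is the change of variables $j_\ell = r+i_\ell$ with $i_\ell\in\N$, which turns the constraint $j_\ell\ge r,\ \sum j_\ell=p$ into $i_\ell\ge 0,\ \sum i_\ell = p-rm$, yielding
\[
m!\,S_r(p,m) \;=\; p!\sum_{i_1+\dots+i_m=p-rm}\frac{1}{(r+i_1)!\cdots(r+i_m)!},
\]
and dividing by $m!$ produces the claimed identity. There is no real obstacle here: the only thing one has to be careful about is that blocks are labelled in the ordered count (so one divides by the full $m!$, without any symmetry correction for repeated block sizes), which is handled automatically because the multinomial coefficient already counts ordered assignments of elements to labelled blocks.
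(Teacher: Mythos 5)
Your proof is correct and follows essentially the same route as the paper: counting ordered partitions with prescribed block sizes $r+i_\ell$ via multinomial coefficients, summing over compositions of $p-rm$, and dividing by $m!$ to pass to unordered partitions. Your explicit remark that each unordered partition corresponds to exactly $m!$ ordered ones (because the blocks are distinct sets) is a slightly more careful justification of the division step, but the argument is the same.
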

\begin{proof}
Given $i_1,\dots,i_m\in \N$  such that 
$i_1+\dots+i_m=p-rm$, let us count the number of ordered partitions 
of $\{1,\dots,p\}$
in $m$ parts where the first part has $r+i_1$ elements,
the second part has $r+i_2$ elements and so on.

There are $\binom{p}{r+i_1}$ possibilities for the first part.
 There are $\binom{p-r-i_1}{r+i_2}$ possibilities for the second part and so on. Therefore the number of ordered partitions where the first part has $r+i_1$ elements,
 the second part has $r+i_2$ elements and so on is 
\[\frac{p!}{(r+i_1)!\times\ldots\times (r+i_m)!}.\]
Consequently, the number of ordered partitions of $\{1,\dots,p\}$ in $m$ parts having at least $r$ elements is 
\[
\sum_{i_1+\dots+i_m=p-rm}\frac{p!}{(r+i_1)!\times\ldots\times(r+i_m)!}.
\]
Finally, when the order is not taken into account, by dividing by $m!$, 
one may deduce that
\[
S_r(p,m)=\frac{p!}{m!}\sum_{i_1+\dots+i_m=p-rm}\frac{1}{(r+i_1)!\times\ldots\times(r+i_m)!}.  
\] 
\end{proof}

\begin{proof}{Theorem {\ref{thm:identity}}}Let us recall the multinomial formula. Given $x_1,\dots,x_m\in \R$ and $k\in \N$, we have 
\[
(x_1+\dots+x_m)^{k}=\sum_{i_1+\dots+i_m=k}\frac{k!}{i_1! \times \dots \times i_m!}x_1^{i_1}\times\ldots\times x_m^{i_m}.
\]
Let $k=p-r m$. Since $\E(X_1^s)=\frac{s!r!}{(s+r)!}$, the multinomial formula and Lemma \ref{lem:identity} give 
\begin{eqnarray*}\E\left[ \left(X_1+\dots+X_m\right)^k\right] &=& \sum_{i_1+\dots+i_m=k}\frac{k!}{i_1!\times \dots \times i_m!} \E(X_1^{i_1})\times\ldots\times \E(X_m^{i_m})\\
&=& (r!)^mk!\sum_{i_1+\dots+i_m=k}\frac{1}{(r+i_1)!\times \dots \times (r+i_m)!}\\
&=&\frac{m!(r!)^m k!}{(k+rm)!}S_r(k+rm,m)\\
&=&\frac{m!(r!)^m (p-rm)!}{p!}S_r(p,m) 
\end{eqnarray*}
which finishes the proof. 
\end{proof}

\subsection{Proof of Proposition \ref{prop:bounds_Jansen}}

Proposition \ref{prop:bounds_Jansen} is a consequence of the following lemma.

\begin{lemma}\label{lem:c}
Let $k\ge 2$, $a\in (0,1)$ and $f\colon x\in [0,1]\to a^k+ka^{k-1}(x-a)+c(x-a)^2$
where $c\ge 0$ is such that $f(1)=1$ (namely $c = \frac{a^{k-1}(ak - a - k)+1}{(1-a)^2}$). Then $f(x)\ge x^k$ for all $x\in[0,1]$.
\end{lemma}

\begin{figure}
\centering
\begin{tikzpicture}[scale=5]
  \draw[->] (-0.05,0) -- (1.05,0) node[right]{$x$};
  \draw[->] (0,-0.05) -- (0,1.05) node[above]{$y$};
   \draw[samples=200,domain=0:1] plot(\x,{(\x)^3)});
   \draw[samples=200,domain=0:1,color=blue] plot(\x,{0.027+ 0.27*(\x-0.3)+ 1.6*(\x-0.3)^2)});
\draw (0.5,0.12) node[right]{$x\mapsto x^k$};
\draw (0.8,0.58) node[left,color=blue]{$x\mapsto f(x)$};
\draw (0.3,0.015) -- (0.3,-0.015);
\draw (0.3,-0.015) node[below]{$a$};

\draw (1,0.015) -- (1,-0.015);
\draw (1,-0.015) node[below]{$1$};

\draw (0.015,1) -- (-0.015,1);
\draw (-0.015,1) node[left]{$1$}; 
\end{tikzpicture}
\caption{Illustration of the inequality given in Lemma \ref{lem:c}.}
\label{fig:illustration_ineq}
\end{figure}
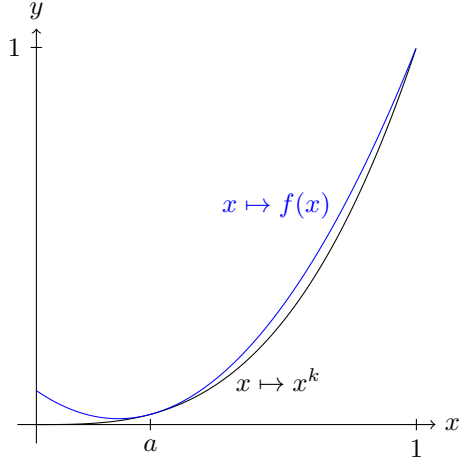
\begin{proof}
First note that for all $x\in[0,1]$ the condition $f(x)\ge x^k$ is equivalent to 
\[
ka^{k-1}(x-a)+c(x-a)^2 \ge x^k-a^k = (x-a)(x^{k-1}+ax^{k-2}+\dots+a^{k-1}).
\]
Note that this inequality holds if and only if 
\begin{eqnarray*}
ka^{k-1}+c(x-a)&\ge& x^{k-1}+ax^{k-2}+\dots+a^{k-1}\text{ for all }x\in [a,1] \text{ and}\\
ka^{k-1}+c(x-a)&\le& x^{k-1}+ax^{k-2}+\dots+a^{k-1}\text{ for all } x\in [0,a].
\end{eqnarray*}
Let $d(x)=ka^{k-1}+c(x-a)$ and $p(x)=x^{k-1}+ax^{k-2}+\dots+a^{k-1}$. Thus $p(a)=d(a)$ and $p(1)=d(1)$, by construction of $c$. Because $p$ is convex and $d$ is affine, one may deduce that $d(x)\le p(x)$ once $x\in [0,a]$ and $d(x)\ge p(x)$ once $x\in [a,1]$, which completes the proof. 
\end{proof}

\begin{proof}{Proposition {\ref{prop:bounds_Jansen}}}
By Lemma \ref{lem:c}, for $a=\E(\overline{X}_m)=\frac{1}{r+1}$, $r>0$, for all $x\in [0,1]$ we get that
\begin{eqnarray*}
x^k &\le& \E(\overline{X}_m)^k+k\E(\overline{X}_m)^{k-1}\left(x-\E(\overline{X}_m)\right)+ c\left(x-\E(\overline{X}_m)\right)^2 \\
&\le& \frac{1}{(r+1)^k}+\frac{k}{(r+1)^{k-1}}\left(x-\frac{1}{r+1}\right) + c\left(x-\frac{1}{r+1}\right)^2 
\end{eqnarray*} 
where $c=(1+\frac{1}{r})^2\left(1-\frac{k r+1}{(r+1)^k} \right)$. This inequality implies that 
\begin{eqnarray*}
\V_r(k,m)=m^k\E(\overline{X}_m^k) &\le& m^k\left(\E(\overline{X}_m)^k+c {\rm var}(\overline{X}_m)\right) \\
&\le & \frac{m^k}{(1+r)^k}+c\frac{rm^{k-1}}{(1+r)^2(2+r)}\\
&\le & \frac{m^k}{(r+1)^k} + \frac{(r + 1)^k - 1 - k r}{(r + 1)^k r (r+2)} m^{k-1}.
\end{eqnarray*}
\end{proof}

\subsection{Proof of Proposition \ref{prop:asym}}

We use in the proof a well-known beta integral (see, for example, \cite{hbeta}): let $a, b\in \N$ and $x\in\R_{>0}$ then we have 
\begin{equation} \label{intbeta}
\int_{0}^{x}(x-t)^a t^b dt=\frac{a!b!}{(a+b+1)!}x^{a+b+1}.
\end{equation}
To compute explicitly the density of $X_1+\dots+X_m$ on the tail $[m-1,m]$, we use the following technical lemma. 
Let $h_r$ be the following density
\begin{equation} \label{hbeta}
h_r(x) = \begin{cases}
r x^{r-1} &\text{if }x\in[0,1]\\
0 &\text{otherwise}
\end{cases}.
\end{equation}
Actually, $h_r$ is the density of $1-X$ where the density $X$ is $g_r$ \eqref{beta}. Convolution computations are slightly easier to handle with $h_r$ than $g_r$.

\begin{lemma}\label{lem:h}
Let $m\in \N_{>0}$ then the following equality and inequality hold
\begin{eqnarray}\label{eq1}
h_r^{*m}(x) &=\frac{(r!)^m}{(mr-1)!}x^{mr-1}, \text{ for all }x\in [0,1].\\ \label{eq2}
h_r^{*m}(x) &\le \frac{(r!)^m}{(mr-1)!}x^{mr-1}, \text{ for all }x\in \R_{\ge 0}.
\end{eqnarray}
\end{lemma}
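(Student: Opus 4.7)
The plan is to prove both statements simultaneously by induction on $m$, exploiting the convolution recursion $h_r^{*(m+1)} = h_r^{*m} * h_r$ together with the beta integral \eqref{intbeta}.

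The base case $m = 1$ is immediate: on $[0,1]$ we have $h_r^{*1}(x) = h_r(x) = r x^{r-1} = \frac{(r!)^1}{(r-1)!} x^{r-1}$, and off $[0,1]$ the density $h_r$ vanishes, so the inequality \eqref{eq2} holds trivially.

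For the inductive step, assume both \eqref{eq1} and \eqref{eq2} at level $m$. Write
\[
h_r^{*(m+1)}(x) = \int_{0}^{1} h_r^{*m}(x-t)\, r t^{r-1}\, dt,
\]
where the integration domain reflects the support of $h_r$. If $x \in [0,1]$, then $x - t \in [0,1]$ for $t \in [0,x]$, so the inductive equality \eqref{eq1} applies and the integral becomes
\[
\frac{r\,(r!)^m}{(mr-1)!}\int_0^x (x-t)^{mr-1} t^{r-1}\, dt,
\]
which by \eqref{intbeta} with $a = mr-1$, $b = r-1$ equals $\frac{(r!)^{m+1}}{((m+1)r-1)!}\, x^{(m+1)r-1}$, giving \eqref{eq1} at level $m+1$.

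For \eqref{eq2} at level $m+1$ when $x > 1$, the inductive bound \eqref{eq2} gives $h_r^{*m}(x-t) \le \frac{(r!)^m}{(mr-1)!}(x-t)^{mr-1}$ for every $t \in [0,1]$, so
\[
h_r^{*(m+1)}(x) \le \int_0^1 \frac{(r!)^m}{(mr-1)!}(x-t)^{mr-1} r t^{r-1} dt \le \int_0^x \frac{(r!)^m}{(mr-1)!}(x-t)^{mr-1} r t^{r-1} dt,
\]
where the last inequality uses $x > 1$ together with nonnegativity of the integrand to enlarge the domain. Applying \eqref{intbeta} to the right-hand side yields again $\frac{(r!)^{m+1}}{((m+1)r-1)!}\, x^{(m+1)r-1}$, and the case $x \in [0,1]$ is already handled by the equality just proved. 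The only subtle point is this domain-enlargement trick, which turns the truncated convolution into a beta integral; everything else is bookkeeping.
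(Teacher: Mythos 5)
Your proof is correct and follows essentially the same route as the paper: induction on $m$ via the convolution recursion $h_r^{*(m+1)}=h_r^{*m}*h_r$ and the beta integral \eqref{intbeta}. The only cosmetic difference is that where you enlarge the integration domain from $[0,1]$ to $[0,x]$ by nonnegativity, the paper integrates over $[0,x]$ from the start and majorizes $h_r(t)$ by $rt^{r-1}$ there --- the same step in different clothing.
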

\begin{proof}  

Let us prove \eqref{eq1} by induction.
 When $m=1$, one may notice that whatever $x\in [0,1]$ we have $h_r^{*m}(x)=h_r(x)$.
Let $m\in \N_{>0}$ such that
\[h_r^{*m}(x)=\frac{(r!)^m}{(mr-1)!}x^{mr-1} \quad \forall x\in [0,1].\]
Therefore for $x\in [0,1]$ we have that
\begin{eqnarray*}
h_r^{*m+1}(x)&=&\int_{\R}h^{*m}_r(x-t)h_r(t)dt=\int_{0}^{x}h^{*m}_r(x-t)rt^{r-1}dt\\
&=&\frac{r(r!)^m}{(mr-1)!}\int_{0}^{x}(x-t)^{mr-1}t^{r-1}dt\\
&=& \frac{r(r!)^m}{(mr-1)!}\frac{(mr-1)!(r-1)!}{((m+1)r-1)!}x^{(m+1)r-1}\\
&=&\frac{(r!)^{m+1}}{((m+1)r-1)!}x^{(m+1)r-1}.
\end{eqnarray*}
The proof of \eqref{eq2} by induction is quite similar than the proof of \eqref{eq1}. When $m=1$, the result is straightforward. 
Let $m\in \N_{>0}$ such that
\[h_r^{*m}(x)\le \frac{(r!)^m}{(mr-1)!}x^{mr-1} \quad \forall x\in  \R_{\ge 0}.\]
Therefore for $x\in \R_{\ge 0}$ we have 
\begin{eqnarray*}
h_r^{*m+1}(x)&=&\int_{\R}h^{*m}_r(x-t)h_r(t)dt=\int_{0}^{x}h^{*m}_r(x-t)h_r(t)dt\\
&\le &\frac{r(r!)^m}{(mr-1)!}\int_{0}^{x}(x-t)^{mr-1}t^{r-1}dt\\
&\le & \frac{r(r!)^m}{(mr-1)!}\frac{(mr-1)!(r-1)!}{((m+1)r-1)!}x^{(m+1)r-1}\\
&\le &\frac{(r!)^{m+1}}{((m+1)r-1)!}x^{(m+1)r-1}. 
\end{eqnarray*} 
\end{proof}

Note that the only difference between the proof of \eqref{eq1} and \eqref{eq2} is the majorization of $h_r(t)$ by $rt^{r-1}$. 
We are now ready to prove the explicit formula for the $m$-th convolution $g_r^{*m}$ on $[m-1,m]$ and an upper bound on $[0,m]$.

\begin{corollary}
\label{cor:tail}
For all $x\in [0,m]$ we have that
\[g_r^{*m}(x)\le \frac{(r!)^m}{(mr-1)!}(m-x)^{mr-1}.\]
Moreover if $x\in [m-1,m]$ then 
\[g_r^{*m}(x)=\frac{(r!)^m}{(mr-1)!}(m-x)^{mr-1}.\]
\end{corollary}
\begin{proof}
By Lemma \ref{lem:h}, it  suffices to prove that $g_r^{*m}(x)=h_r^{*m}(m-x)$ for all $x\in[0,m]$.
Let $x\in [0,m]$ and set $z=m-x$, because the density of $(1-X_1)+\dots +(1-X_m)$ is
 $h_r^{*m}$  then the  following equalities occur
\begin{eqnarray*}
\frac{\partial}{\partial z}\P((1-X_1)+\dots +(1-X_m)\le z) &=& h_r^{*m}(z)\\
\frac{\partial}{\partial z}\P(X_1+\dots+X_m\ge m-z) &=& h_r^{*m}(z)\\
g_r^{*m}(m-z) &=& h_r^{*m}(z)\\
g_r^{*m}(x) &=& h_r^{*m}(m-x).
\end{eqnarray*}
\end{proof}

One may notice that if $r=1$ and $x\in [m-1,m]$, then $g_r^{*m}(x)=(m-x)^{m-1}/(m-1)!$ which is the density in the tail of the Irvin-Hall distribution (see \cite{hall1927distribution,irwin1927frequency}). 

Upper and lower bounds given in Proposition \ref{prop:asym} are straightforward consequences of  Corollary \ref{cor:tail}. Indeed, the upper bound is just the beta integral of $$\int_{0}^{m}\frac{(r!)^m}{(mr-1)!}(m-x)^{mr-1}x^kdx.$$ 
\begin{proof}{Proposition {\ref{prop:asym}}} The lower bound is derived from the following computations 
\begin{eqnarray*}
&&\frac{(r!)^m}{(mr-1)!}\int_{0}^{m-1}x^{k}(m-x)^{mr-1}dx \\
&=& \frac{(r!)^m}{(mr-1)!}\int_{0}^{m-1}x^{k}(1+(m-1)-x)^{mr-1}dx \\ &=&\frac{(r!)^m}{(mr-1)!}\int_{0}^{m-1}x^{k}\sum_{i=0}^{mr-1}\binom{mr-1}{i} (m-1-x)^{i}dx \\
&=& \frac{(r!)^m}{(mr-1)!}\sum_{i=0}^{mr-1} \binom{mr-1}{i} \frac{k!i!(m-1)^{k+i+1}}{(k+i+1)!}  \\
&=&  \frac{(r!)^mk!}{(k+mr)!} \sum_{i=1}^{mr} \binom{k+mr}{k+i}(m-1)^{k+i}.
\end{eqnarray*}

Because $ \V_{r}(k,m) \ge \frac{(r!)^m}{(mr-1)!}\int_{m-1}^{m}x^{k}(m-x)^{mr-1}dx$ one may deduce the following inequality
\[\V_{r}(k,m) 
 \ge   \frac{k!(r!)^m m^{k+mr} }{(k+mr)!}\left(1 - \frac{(m-1)^k}{m^{k+mr}}\sum_{i=1}^{mr} \binom{k+mr}{k+i}(m-1)^{i} \right).
\]

Let us assume that $k>mr$, then $\binom{k+mr}{k+i}\le \binom{k+mr}{k}$, for $1\le i\le mr$, hence
\begin{eqnarray*}
&&\frac{(m-1)^k}{m^{k+mr}}\sum_{i=1}^{mr} \binom{k+mr}{k+i}(m-1)^{i} \\
&\le& \frac{1}{m^{mr}}\left(1-\frac{1}{m}\right)^k \sum_{i=1}^{mr}\binom{k+mr}{k}  (m-1)^{i} \\
&\le& \left(1-\frac{1}{m}\right)^k \binom{k+mr}{k} \frac{((m-1)^{m r+1}-(m-1)}{m-2}  \\ 
&\le& \left(1-\frac{1}{m}\right)^k (k+mr)^{mr} \frac{(m-1)^{2mr}}{(mr)!} \underset{k \to +\infty}{\longrightarrow} 0.
\end{eqnarray*}
\end{proof}

\subsection{Proof of Proposition \ref{prop:bounds_expo}}

\begin{proof}{Proposition {\ref{prop:bounds_expo}}}
i) Note that the density of the random variable $Y_i=(r-1)X_i$  on $[0,r-1]$ is
\[ f(x) =\frac{r}{r-1}\left(1-\frac{x}{r-1}\right)^{r-1}.\]
Due to the following inequality for all $x\in [0,r-1]$ 
\[
\frac{r}{r-1}\left(1-\frac{x}{r-1}\right)^{r-1} \le \frac{r}{r-1}\exp(-x)
\]
one may deduce that 
\begin{eqnarray*} (r-1)^k\E\left(\sum_{i=1}^m X_i  \right)^k = \E\left(\sum_{i=1}^m Y_i  \right)^k &\le& \left(\frac{r}{r-1}\right)^k \E\left(\sum_{i=1}^m \mathcal{E}_i\right)^k\\
&\le& \left(\frac{r}{r-1}\right)^k \frac{(m-1+k)!}{(m-1)!}
\end{eqnarray*}since $\sum_{i=1}^m \mathcal{E}_i$ has an Erlang distribution  (the density of $\sum_{i=1}^m \mathcal{E}_i$ is  \\
$h(x)=\frac{x^{m-1}\exp(-x)}{(m-1)!}$) whose  moment of order $k$ is $\frac{(m-1+k)!}{(m-1)!}$. This inequality completes the proof of i).

ii) The density of the random variable $(r-1)X_1$ converges pointwise to the density  of $\mathcal{E}_1$ namely $\lim_{r\to +\infty}f(x)=\exp(-x)$, for all $x\in [0,+\infty)$. Therefore, the following limit holds
\[ 
  \lim_{r\to +\infty}x^kf^{*m}(x)=
  \frac{x^{m+k-1}\exp(-x)}{(m-1)!}.
\]
Finally, the Dominated Convergence Theorem gives that
\begin{eqnarray*}
\lim_{r\to +\infty}
(r-1)^k\E\left(\sum_{i=1}^m X_i\right)^k &=&
\lim_{r\to +\infty} \int_{0}^{+\infty} \frac{x^{m+k-1}\exp(-x)}{(m-1)!}dx\\
&=&  \E\left(\sum_{i=1}^m \mathcal{E}_i\right)^k=\frac{(m-1+k)!}{(m-1)!}.
\end{eqnarray*}
\end{proof}

\section*{Funding}
The first author is supported by The National Science Centre, Poland NCN grants no.  2014/13/D/ST1/03491 and 2017/27/B/ST1/01467.
The Institut de Mathématiques de Bourgogne (IMB) receives support from the EIPHI Graduate School (contract ANR-17-EURE-0002). The second author 
receives support from the region Bourgogne-Franche-Comté (EPADM project).

\end{document}